\theoremstyle{plain}
\newtheorem{corollary}{\bf Corollary}
\newtheorem{definition}{\bf Definition}
\newtheorem{lemma}{\bf Lemma}
\newtheorem{proposition}{\bf Proposition}
\newtheorem{remark}{Remark}
\newtheorem{theorem}{\bf Theorem}
\theoremstyle{definition}
\numberwithin{equation}{section}
\begin{document}
\title[Gradient Ricci solitons]{Gradient Ricci solitons carrying a closed conformal vector field}

\author{J. F. Silva Filho}
\author{R. Sharma*}\thanks{*Corresponding author}

\address[J. F. Silva Filho]{Instituto de Ci\^encias Exatas e da Natureza, 
Universidade da Integra\c c\~ao Internacional da Lusofonia Afro-Brasileira - 
UNILAB,  62.790-000, Reden\c c\~ao - CE, Brazil}
\email{joaofilho@unilab.edu.br}

\address[R. Sharma]{Department Of Mathematics, University Of New Haven, West Haven,
CT 06516, USA}
\email{rsharma@newhaven.edu}

\keywords{Gradient Ricci Soliton; constant scalar curvature; conformal vector field} \subjclass[2010]{Primary 53C25, 53C20, 53C44}
\date{\today}

\begin{abstract}
We show that a complete gradient Ricci soliton $(M^n,\,g)$ with constant scalar curvature and a  non-parallel closed conformal vector field is isometric to either the Euclidean space, or an Euclidean sphere, or negatively Einstein warped product of the real line with a complete  non-positively Einstein manifold. Moreover, we show that a  K\"ahler gradient Ricci soliton of real dimension $\ge 4$, with a non-parallel closed conformal real vector field is Ricci-flat (Calabi-Yau) and is flat in dimension 4.  Finally, we show that a Ricci soliton whose associated 1-form is harmonic, has constant scalar curvature. 
\end{abstract}

\maketitle

\section{Introduction}\label{int}

Gradient Ricci solitons are important  in understanding the Hamilton's Ricci flow \cite{Hamilton2}. They arise often as singularity models of the Ricci flow and that is why understanding them is an important question in the field. In view of their importance, it is then natural to seek classification results for gradient Ricci solitons. Indeed, the classification of gradient Ricci solitons has been a subject of interest for many people. We refer the reader to an excellent survey by Cao  \cite{caoALM11} and references therein for a nice overview on the subject.

A Riemannian metric $g$ on a smooth manifold $M^n$ is called {\it gradient Ricci soliton} if there exists a smooth potential function $f$ on $M^n$ such that the Ricci tensor ${\rm Ric}$ of the metric $g$ satisfies the following equation

\begin{equation}
\label{maineq}
{\rm Ric}+{\rm Hess}\,f=\lambda g,
\end{equation} for some constant $\lambda.$ Here, ${\rm Hess}\,f$ denotes the Hessian of $f.$  Clearly, when $f$ is a constant a gradient Ricci soliton is simply an Einstein manifold. If $\nabla f$ is replaced by a general vector field $V$, then the above equation defines a Ricci soliton. The Ricci soliton will be called {\it expanding}, {\it steady} or {\it shrinking} if  $\lambda<0,\,\lambda=0$ or $\lambda>0$, respectively. Ricci solitons are also of interest to physicists who refer to them as quasi-Einstein metrics (Friedan \cite{Friedan}).

A classical topic in this subject is to classify gradient Ricci solitons with constant scalar curvature. It is already known that compact Ricci solitons with constant scalar curvature are Einstein. Moreover, it is also easy to check that a steady gradient Ricci soliton with constant scalar curvature must be Ricci-flat. Petersen and Wylie \cite{PW2} showed that a shrinking (respectively, expanding) gradient Ricci soliton with constant scalar curvature satisfies $0\leq R\leq n\lambda$ (respectively, $n\lambda\leq R\leq 0$). Recently, Fern\'andez-L\'opez and Garc\'ia-R\'io \cite{MG} showed that if a gradient Ricci soliton $(M^{n},\,g)$ has constant scalar curvature $R,$ then $R\in\{0,\,\lambda,\,\ldots,\,(n-1)\lambda,\,n\lambda\}.$ They also showed the rigidity of gradient Ricci solitons with constant scalar curvature under the assumption that the Ricci operator has constant rank. Despite the progress, the complete classification of gradient Ricci solitons with constant scalar curvature still remains open. 

In order to make our approach more understandable, we need to recall some terminology. A smooth vector field $X$ on a Riemannian manifold $(M^n,g)$ is conformal if $\mathcal{L}_X g=2\psi g$ for some smooth function $\psi\in C^\infty(M),$ where $\mathcal{L}_X g$ is the Lie derivative of $g$ with respect to $X.$ In this case, the function $\psi$ is called conformal factor of $X.$ A particular case of a conformal vector $X$ is that for which 
\begin{equation}
\label{closedEq}
\nabla_{Y} X=\psi Y,\,\,\hbox{for any}\,\,Y\in\mathfrak{X}(M),
\end{equation} in this situation we say that $X$ is {\it closed}. They are also known as concircular vector fields (Fialkow \cite{Fialkow}) and appeared in the study of conformal mappings preserving geodesic circles having interesting applications in general relativity , e.g. trajectories of timelike concircular fields in the de Sitter model determine the world lines of receding or colliding galaxies satisfying the Weyl hypothesis (cf. \cite{Takeno}). Chen \cite{Chen} provided a simple characterization of generalized Robertson-Walker spacetimes in terms of timelike concircular vector field. Yet more particularly, a closed and conformal vector field $X$ is said to be {\it parallel} if $\psi$ vanishes identically. For more details, we refer the reader to  \cite{Amur/Hegde,Caminha,Obata,Obata/Yanno,Tanno/Webber} and \cite{Tashiro}.

We recall that the Gaussian Ricci soliton is given by $\Bbb{R}^n$ with canonical metric  and the potential function $f(x) = \frac{\lambda}{2}|x|^2,$ where $\lambda$ is an arbitrary constant. It has constant scalar curvature and carries a closed conformal vector field. In \cite{DRS}, Di\'ogenes, Ribeiro Jr. and Silva Filho were able to show that a complete gradient Ricci soliton carrying  a non-parallel closed conformal vector field must be either locally conformally flat for $n=3$ or $n=4,$ or has harmonic Weyl tensor for $n\ge 5.$ But, up to our knowledge, there is no classification results for steady and expanding gradient Ricci solitons with harmonic weyl tensor; the shrinking case was classified by Cao, Wang and Zhang in \cite{CWZ}, by  Fern\'{a}ndez-L\'opes  and Garc\'ia-R\'io in \cite{FLGR} and by Munteanu and Sesum \cite{MS}. It was shown by Jauregui and Wylie \cite{JW} that a gradient Ricci soliton admitting a non-homothetic conformal vector field $V$ that preserves the gradient $1$-form $df$ (i.e. $\nabla_{V} f$ is constant) is Einstein. Recently, Sharma \cite{Sharma} showed that gradient Ricci soliton with constant scalar curvature and admitting a non-homothetic conformal vector field leaving the potential vector field invariant is Einstein.

Motivated by these historical developments, \textit{we provide a classification result for gradient Ricci solitons with constant scalar curvature and carrying a non-parallel closed conformal vector field}. More precisely, we establish the following result.

\begin{theorem}\label{thmA}
Let $\big(M^n,\,g,\, f,\,\lambda\big),$ $n\geq3,$ be a complete gradient Ricci soliton with constant scalar curvature and carrying a  non-parallel closed conformal vector field. Then, $(M^n,\,g)$ is isometric to either 
\begin{enumerate}
\item the Euclidean space  $\Bbb{R}^{n}$, or
\item a round sphere $\Bbb{S}^n$, or
\item a negatively Einstein warped product of the real line with a complete non-positively Einstein manifold. 
\end{enumerate}
In dimension 4, the class (3) is a quotient of a hyperbolic space form $\Bbb{H}^n$. 
\end{theorem}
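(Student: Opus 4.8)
\emph{Step 1: setup and preliminary identities.} The plan is to prove a rigidity phenomenon: constancy of the scalar curvature forces either the potential $f$ or the conformal factor $\psi$ of $X$ to be constant, after which the metric is pinned down by classical Obata/Tashiro rigidity for closed conformal (concircular) vector fields, together with \cite{DRS} when $n=4$. Let $R$ denote the constant scalar curvature and $\psi$ the conformal factor of $X$, so $\nabla_YX=\psi Y$ for all $Y$, with $\psi\not\equiv0$ since $X$ is non-parallel. Tracing \eqref{maineq} gives $\Delta f=n\lambda-R$, which is constant, and the standard soliton identity $\tfrac12\nabla R={\rm Ric}(\nabla f)$ then forces ${\rm Ric}(\nabla f)=0$. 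From $\nabla_YX=\psi Y$ one computes $R(Y,Z)X=(Y\psi)Z-(Z\psi)Y$ and, contracting, ${\rm Ric}(X)=-(n-1)\nabla\psi$. Using the symmetry of ${\rm Ric}$,
\begin{equation*}
0={\rm Ric}(\nabla f,X)={\rm Ric}(X,\nabla f)=-(n-1)\langle\nabla\psi,\nabla f\rangle,
\end{equation*}
so $\nabla\psi\perp\nabla f$ on all of $M$.

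\emph{Step 2: pointwise dichotomy and globalization.} Put $u:=\langle X,\nabla f\rangle$. Differentiating and using $\nabla_YX=\psi Y$, ${\rm Hess}\,f=\lambda g-{\rm Ric}$ and ${\rm Ric}(X)=-(n-1)\nabla\psi$, one obtains $\nabla u=\psi\,\nabla f+\lambda X+(n-1)\nabla\psi$; a second differentiation yields
\begin{equation*}
{\rm Hess}\,u(Y,Z)=(Y\psi)(Zf)+\psi\,{\rm Hess}\,f(Y,Z)+\lambda\psi\,g(Y,Z)+(n-1){\rm Hess}\,\psi(Y,Z).
\end{equation*}
The left-hand side is symmetric in $Y,Z$, and so is every term on the right except $(Y\psi)(Zf)$; hence $d\psi\wedge df=0$, i.e. $\nabla\psi$ and $\nabla f$ are pointwise linearly dependent. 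Combined with $\nabla\psi\perp\nabla f$, this forces, at every point of $M$, that $\nabla\psi=0$ or $\nabla f=0$. Since gradient Ricci solitons are real-analytic, $f$ is real-analytic; if $f$ is non-constant then $\{\nabla f=0\}$ is a proper real-analytic subset, its complement is open, dense and connected, and there $\nabla\psi\equiv0$, so $\psi$ is constant on $M$. Therefore \emph{$\psi$ is constant or $f$ is constant.}

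\emph{Step 3: the two cases.} If $\psi$ is constant, then $\psi\equiv c\neq0$; one checks that $X=\nabla\ell$ with $\ell:=\tfrac1{2c}|X|^2$ and ${\rm Hess}\,\ell=c\,g$, and a complete manifold admitting a non-constant function whose Hessian is a nonzero constant multiple of $g$ is isometric to $\mathbb R^n$ (Tashiro \cite{Tashiro}) --- case (1). If $\psi$ is non-constant, then by Step 2 $f$ is constant, so \eqref{maineq} reads ${\rm Ric}=\lambda g$ and $(M^n,g)$ is Einstein; moreover $\lambda\neq0$, since ${\rm Ric}(X)=-(n-1)\nabla\psi\not\equiv0$. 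Combining ${\rm Ric}(X)=\lambda X$ with ${\rm Ric}(X)=-(n-1)\nabla\psi$ gives $X=-\tfrac{n-1}{\lambda}\nabla\psi$, hence ${\rm Hess}\,\psi=-\tfrac{\lambda}{n-1}\,\psi\,g$ with $\psi$ non-constant. When $\lambda>0$ this is Obata's equation, so $(M^n,g)\cong\mathbb S^n$ (Obata \cite{Obata,Obata/Yanno}) --- case (2). When $\lambda<0$, the Tashiro classification of complete manifolds carrying a non-constant solution of ${\rm Hess}\,\psi=k^2\psi\,g$ with $k^2>0$ exhibits $(M^n,g)$ as a warped product $\mathbb R\times_\varphi N^{n-1}$ with $N$ complete; since $M$ is moreover Einstein with $\lambda<0$, the fibre $N$ must itself be Einstein with non-positive Ricci curvature --- case (3).

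\emph{Step 4: dimension four, and the main difficulty.} For $n=4$, \cite{DRS} guarantees that the soliton is locally conformally flat; in case (3) it is additionally Einstein, and an Einstein $4$-manifold with vanishing Weyl tensor has constant sectional curvature, here negative, so by completeness $(M^4,g)$ is a quotient of $\mathbb H^4$. The delicate point of the whole argument is the globalization in Step 2: the pointwise alternative is obtained cheaply from the symmetry of ${\rm Hess}\langle X,\nabla f\rangle$, but a connected manifold may a priori equal the union of the two proper closed sets $\{\nabla\psi=0\}$ and $\{\nabla f=0\}$, so the real-analyticity of gradient Ricci solitons is genuinely invoked to conclude that one of $f,\psi$ is globally constant. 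The remainder is bookkeeping with standard soliton identities together with the classical rigidity theorems for closed conformal vector fields.
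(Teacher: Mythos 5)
Your proof is correct and rests on the same pillars as the paper's: the symmetry of ${\rm Hess}\,\langle X,\nabla f\rangle$ computed from the soliton equation together with ${\rm Ric}(X)=-(n-1)\nabla\psi$, real-analyticity of gradient Ricci solitons to globalize a pointwise dichotomy, and the Tashiro/Obata/Kanai rigidity theorems at the end. The middle of your argument is organized differently and more efficiently. The paper first proves a standalone lemma (valid without constant scalar curvature) asserting that either $|X|^2\nabla f=X(f)X$ or $M\cong\Bbb{R}^n$, and only then, using $R$ constant, runs the chain ${\rm Ric}(\nabla f)=0\Rightarrow X(f){\rm Ric}(X)=0\Rightarrow X(f)X(\psi)X=0\Rightarrow X(\psi)\nabla f=0$ before invoking analyticity. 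You instead read off $d\psi\wedge df=0$ directly from the symmetry of the Hessian and pair it with the orthogonality $\langle\nabla\psi,\nabla f\rangle=0$, which is exactly ${\rm Ric}(\nabla f)=0$ tested against $X$; since two nonzero vectors cannot be simultaneously parallel and orthogonal, at each point $\nabla\psi=0$ or $\nabla f=0$. This bypasses the identity $|X|^2\nabla f=X(f)X$ and the attendant manipulations entirely, at the small cost of not isolating a lemma that holds without the constant scalar curvature hypothesis. Two minor divergences deserve a remark. For $n=4$ you quote \cite{DRS} for local conformal flatness where the paper argues directly via Gauss--Codazzi on the $3$-dimensional fibre; both are fine. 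More importantly, your claim that the fibre $N$ in case (3) is non-positively Einstein does not follow merely from ``$M$ is Einstein with $\lambda<0$'': it requires the explicit warping function $\xi(t)=c_1e^{\sqrt{-k}t}+c_2e^{-\sqrt{-k}t}$ with $c_1,c_2\ge 0$ (forced by completeness and positivity of $\xi$ on all of $\Bbb{R}$), which makes the fibre's Einstein constant proportional to $-c_1c_2\le 0$; this is precisely what part (ii) of Kanai's Theorem G in \cite{Kanai} supplies, and you should cite it at that point as the paper does.
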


We also obtain the following corollaries.

\begin{corollary}  
If a complete gradient Ricci soliton $\big(M^n,\,g,\, f, \lambda\big),$ $n\geq3,$ with constant scalar curvature admits a non-homothetic closed conformal vector field of bounded norm, then it is isometric to a round sphere. 
\end{corollary}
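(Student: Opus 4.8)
The plan is to obtain the Corollary directly from Theorem~\ref{thmA} together with two elementary facts about closed conformal vector fields on Einstein manifolds. First I would observe that a non-homothetic vector field is automatically non-parallel, since a parallel closed conformal field has vanishing (hence constant) conformal factor; thus the hypotheses of Theorem~\ref{thmA} are met, and $(M^n,g)$ is isometric to one of $\mathbb{R}^n$, a round sphere $\mathbb{S}^n$, or a negatively Einstein warped product $\mathbb{R}\times_h N^{n-1}$ with $N$ complete and non-positively Einstein. In each of these three cases the metric is Einstein, say $\mathrm{Ric}=\mu g$, so it remains only to rule out the case $\mu=0$ (the Euclidean case) and the case $\mu<0$ (the warped-product case).

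Next I would record the standard consequences of the defining equation $\nabla_Y X=\psi Y$ of a closed conformal field $X$. Contracting the curvature identity $R(Y,Z)X=(Y\psi)Z-(Z\psi)Y$ yields $QX=-(n-1)\nabla\psi$, where $Q$ is the Ricci operator, so on an Einstein manifold one gets $\nabla\psi=-\tfrac{\mu}{n-1}X$. Differentiating this and using $\nabla_Y X=\psi Y$ gives $\mathrm{Hess}\,\psi=-\tfrac{\mu}{n-1}\,\psi\,g$, while from $Y|X|^2=2\psi\langle Y,X\rangle$ together with $X=-\tfrac{n-1}{\mu}\nabla\psi$ (valid when $\mu\neq0$) one obtains the first integral $|X|^2=-\tfrac{n-1}{\mu}\psi^2+\mathrm{const}$. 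If $\mu=0$, then already $\nabla\psi=0$, so $\psi$ is constant, contradicting non-homotheticity; hence the Euclidean case cannot occur. If $\mu<0$, then $\mathrm{Hess}\,\psi=c^2\psi\,g$ with $c^2:=-\tfrac{\mu}{n-1}>0$ and $\psi$ non-constant; restricting $\psi$ to a unit-speed geodesic $\gamma$ emanating, in the direction of $\nabla\psi$, from a point where $\nabla\psi\neq0$, and solving the ODE $(\psi\circ\gamma)''=c^2(\psi\circ\gamma)$, shows that $\psi$ is unbounded on the complete manifold $M$ (one of the two rays of $\gamma$ does the job). Consequently $|X|^2=\tfrac{n-1}{|\mu|}\psi^2+\mathrm{const}$ is unbounded, contradicting the bounded-norm hypothesis. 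Thus only the sphere case survives, and $(M^n,g)$ is isometric to a round sphere.

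The routine parts above --- the contracted curvature identity, the Hessian computation, and the first integral for $|X|^2$ --- are standard, so the only point I expect to require a little care is the claim that in the case $\mu<0$ the conformal factor $\psi$ is genuinely unbounded; this is where completeness enters essentially (on a finite geodesic $\psi$ would of course remain bounded). The geodesic ODE argument settles it cleanly, but one could equally invoke the classical theorem of Tashiro \cite{Tashiro} on complete manifolds admitting a non-parallel closed conformal field, under which the $\mu<0$ case is a warped product over the whole real line on which $\psi$ behaves like an unbounded radial function. I do not anticipate any further obstacles.
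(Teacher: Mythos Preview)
Your proof is correct but handles case~(3) by a different route than the paper. The paper works directly with the explicit warped-product structure $\mathbb{R}\times_\xi \bar{M}$ furnished by Kanai's theorem: decomposing $X=\bar{X}+\alpha T$ along the factors, the closed-conformal equation together with the warped-product connection formulas force $\bar{X}=0$ and $\dot{\alpha}=\alpha\,\dot{\xi}/\xi$, whence $\alpha=\xi$ up to a constant and $|X|=\xi(t)=c_1 e^{\sqrt{-k}\,t}+c_2 e^{-\sqrt{-k}\,t}$ is manifestly unbounded. You instead use only the Einstein condition $\mathrm{Ric}=\mu g$ with $\mu<0$: from $QX=-(n-1)\nabla\psi$ you extract the Obata--Tashiro equation $\mathrm{Hess}\,\psi=c^2\psi\,g$ and the first integral $|X|^2=\tfrac{n-1}{|\mu|}\psi^2+\mathrm{const}$, and then the geodesic ODE $(\psi\circ\gamma)''=c^2(\psi\circ\gamma)$ on a complete geodesic shows $\psi$, hence $|X|$, is unbounded. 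Your argument is more intrinsic (no warped-product coordinates needed) and would apply to any complete negatively Einstein manifold carrying such a field; the paper's computation, in exchange, identifies $X$ explicitly on the model. Both arguments eliminate the Euclidean case in the same way, via $\nabla\psi=-\tfrac{\mu}{n-1}X=0$ when $\mu=0$.
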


\begin{corollary} A complete gradient shrinking Ricci soliton $\big(M^n,\,g,\, f, \lambda\big),$ $n\geq3,$ with constant scalar curvature and admitting a non-parallel closed conformal vector field is isometric to either a Euclidean space or a round sphere.
\end{corollary}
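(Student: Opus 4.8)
The plan is to derive this as a direct consequence of Theorem~\ref{thmA} together with the standard sign restriction on the scalar curvature of shrinking solitons. First I would apply Theorem~\ref{thmA} to conclude that $(M^n,g)$ is isometric to one of the three model spaces listed there. The first two, $\Bbb{R}^n$ (carrying its Gaussian shrinking soliton structure when $\lambda>0$) and the round sphere $\Bbb{S}^n$, are precisely the desired conclusion, so the entire content of the corollary reduces to ruling out the third alternative under the shrinking hypothesis.

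To that end, recall that ``shrinking'' means $\lambda>0$, and, as noted in the Introduction, Petersen and Wylie \cite{PW2} showed that a shrinking gradient Ricci soliton with constant scalar curvature $R$ satisfies $0\le R\le n\lambda$; in particular $R\ge 0$. On the other hand, alternative (3) of Theorem~\ref{thmA} is a \emph{negatively} Einstein manifold, so its Ricci tensor equals $cg$ with $c<0$, whence its (constant) scalar curvature is $R=nc<0$, contradicting $R\ge 0$. Thus alternative (3) cannot occur, and only $\Bbb{R}^n$ or $\Bbb{S}^n$ remain. As a self-contained alternative to citing \cite{PW2}, one may argue directly: if $M$ is Einstein with ${\rm Ric}=cg$ and is simultaneously a gradient Ricci soliton, then \eqref{maineq} forces ${\rm Hess}\,f=(\lambda-c)g$; if $\lambda=c$ then $f$ is constant and $\lambda=c<0$, contradicting shrinking, while if $\lambda\ne c$ then $\lambda-c>0$ (since $\lambda>0>c$), so $f$ is a nonconstant solution of ${\rm Hess}\,f=(\text{positive constant})\,g$ on a complete manifold, which by Tashiro's theorem (cf.~\cite{Tashiro}) makes $(M^n,g)$ isometric to Euclidean space, contradicting $c<0$. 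Either way alternative (3) is excluded.

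The argument carries essentially no obstacle; the only point requiring care is the precise meaning of ``negatively Einstein'' in alternative (3) of Theorem~\ref{thmA}, namely that the \emph{total space}---not merely the fibre---is Einstein with a strictly negative constant, which is exactly what renders its scalar curvature negative and hence incompatible with the shrinking condition.
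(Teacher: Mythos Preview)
Your argument is correct: applying Theorem~\ref{thmA} and then eliminating alternative~(3) by the sign of the scalar curvature is a valid and clean way to obtain the corollary. Both your invocation of Petersen--Wylie \cite{PW2} and your self-contained Tashiro argument are sound (in the latter, note that ${\rm Hess}\,f=(\lambda-c)g$ with $\lambda-c\neq 0$ forces $f$ nonconstant, so Tashiro indeed applies).

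The paper proceeds a bit differently. Rather than using the trichotomy of Theorem~\ref{thmA} as a black box, it goes back into the proof and uses the dichotomy established there: either $\psi$ is a nonzero constant (giving Euclidean space), or $\psi$ is nonconstant, in which case $f$ is constant and $g$ is Einstein. In the Einstein case the paper invokes Lemma~\ref{lem1}(3) with $R$ constant to obtain $|{\rm Ric}|^2=\lambda R$, which together with $\lambda>0$ forces $R>0$ (since $R=0$ would give ${\rm Ric}=0$ and hence $\lambda=0$). Then Lemma~\ref{lemConf}(5) gives $X=-\tfrac{n(n-1)}{R}\nabla\psi$, so $X$ is a gradient, and differentiating yields ${\rm Hess}\,\psi=-\tfrac{R}{n(n-1)}\psi\, g$; Obata's theorem \cite{Obata} then gives the round sphere directly. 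Your route is shorter and treats Theorem~\ref{thmA} as a finished product; the paper's route stays entirely within the tools already assembled (Lemmas~\ref{lem1} and~\ref{lemConf} plus Obata) and avoids the external citation of \cite{PW2}.
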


A key ingredient in establishing the proof of Theorem \ref{thmA} is a collection of identities pointed out by Ros and Urbano in \cite[Lemma 1]{RosUrbano}; see also \cite{Castro/Montealegre/Urbano}. It is worthwhile to remark that Theorem \ref{thmA} does not require any sign for the Ricci soliton constant $\lambda.$ At the same time, it does not require any relation between the gradient of the potential function and the assumed closed conformal vector field. Thus, our assumption is clearly weaker than the previous ones used in \cite{DRS,JW} and \cite{Sharma}. Besides, when $M^n$ is isometric to Euclidean space $\Bbb{R}^{n}$ in Theorem \ref{thmA}, it follows from Theorem 1.3 of Pigola, Rigoli, Rimoldi and Setti \cite{PRRS} that $f(x)=\frac{\lambda}{2}|x|^{2}+\langle b,\,x\rangle+c,$ for some $b\in\Bbb{R}^n$ and $c\in \Bbb{R}.$ 

\begin{remark}
The Bryant soliton admits closed conformal vector field, but its scalar curvature is not constant.  Hence, the existence of a closed conformal vector field on a gradient Ricci soliton does not imply the constancy of the scalar curvature. 
\end{remark}

\section{Background}

In this section we review some basic facts that will be useful in the proof of the main result. We begin by recalling some important features of gradient Ricci solitons (cf. Hamilton \cite{Hamilton2}).

\begin{lemma}
\label{lem1}
Let $\big(M^n,\,g,\,f,\,\lambda\big)$ be a gradient Ricci soliton. Then we have:
\begin{enumerate}
\item $R+\Delta f=n\lambda.$
\item $\frac{1}{2}\nabla R={\rm Ric}(\nabla f).$
\item $\Delta R=\langle \nabla R,\nabla f\rangle +2\lambda R-2|Ric|^{2}.$
\item $R+|\nabla f|^{2}=2\lambda f$ (after a possible rescaling).
\end{enumerate}
\end{lemma}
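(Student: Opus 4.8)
The plan is to derive all four identities directly from the defining equation \eqref{maineq}, using only two standard tools: the trace operation and the contracted second Bianchi identity ${\rm div}\,{\rm Ric}=\tfrac{1}{2}\nabla R$, together with the Ricci commutation formula for third covariant derivatives. First I would obtain (1) by tracing \eqref{maineq}: since ${\rm tr}\,{\rm Ric}=R$, ${\rm tr}({\rm Hess}\,f)=\Delta f$, and ${\rm tr}(\lambda g)=n\lambda$, this gives $R+\Delta f=n\lambda$ immediately.

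For (2) I would take the divergence of \eqref{maineq}. Because $g$ is parallel, the right-hand side contributes nothing. The Ricci term gives ${\rm div}\,{\rm Ric}=\tfrac{1}{2}\nabla R$ by the contracted second Bianchi identity, while for the Hessian term I would invoke the commutation formula ${\rm div}({\rm Hess}\,f)=\nabla(\Delta f)+{\rm Ric}(\nabla f)$, which follows from the Ricci identity applied to $\nabla\nabla\nabla f$ after exploiting the symmetry of ${\rm Hess}\,f$. Substituting $\nabla(\Delta f)=-\nabla R$ from (1) then collapses the sum to $\tfrac{1}{2}\nabla R={\rm Ric}(\nabla f)$. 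For (3) I would take the divergence of this vector identity: the left side yields $\tfrac{1}{2}\Delta R$, and expanding ${\rm div}({\rm Ric}(\nabla f))=({\rm div}\,{\rm Ric})(\nabla f)+\langle {\rm Ric},{\rm Hess}\,f\rangle$ gives $\tfrac{1}{2}\langle\nabla R,\nabla f\rangle$ for the first term (again by Bianchi) and, after replacing ${\rm Hess}\,f=\lambda g-{\rm Ric}$ from \eqref{maineq}, the value $\lambda R-|{\rm Ric}|^2$ for the second. Rearranging and multiplying by two produces the stated formula.

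Finally, for (4) I would show that the function $R+|\nabla f|^2-2\lambda f$ has vanishing gradient. Using $\nabla|\nabla f|^2=2\,{\rm Hess}\,f(\nabla f)$ and substituting ${\rm Hess}\,f=\lambda g-{\rm Ric}$ gives $\nabla|\nabla f|^2=2\lambda\nabla f-2\,{\rm Ric}(\nabla f)=2\lambda\nabla f-\nabla R$ by (2), so that $\nabla\big(R+|\nabla f|^2-2\lambda f\big)=0$. Hence the combination is constant on the connected manifold $M^n$; adjusting $f$ by an additive constant—which leaves ${\rm Hess}\,f$ and therefore \eqref{maineq} unchanged—normalizes this constant to zero, which is precisely the meaning of the parenthetical ``after a possible rescaling.''

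All of these computations are routine once the right tools are lined up, and I do not expect a genuine obstacle. The only point demanding real care is the commutation step underlying (2) and (3): one must apply the Ricci identity to the third derivatives of $f$ and the contracted second Bianchi identity to ${\rm Ric}$ with a single, consistent set of curvature sign conventions, since an inconsistency there is exactly what would flip the signs in the final identities.
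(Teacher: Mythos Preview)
Your proof is correct and entirely standard. Note, however, that the paper does not actually prove this lemma: it is stated as a collection of well-known identities with a reference to Hamilton \cite{Hamilton2}, so your proposal supplies strictly more detail than the paper itself.
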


It follows from Lemma \ref{lem1} (1) and the Hopf maximum principle that a compact Ricci soliton with constant scalar curvature is necessarily Einstein. Moreover, cons-tant scalar curvature is a very restrictive condition for steady gradient Ricci soliton. Indeed, by Lemma \ref{lem1} (3), steady gradient Ricci solitons with constant scalar curvature are Ricci-flat. The same conclusion holds for gradient Ricci solitons with scalar flat curvature. 

According to Petersen and Wylie \cite{PW1}, a gradient Ricci soliton is said to be {\it rigid} if it is isometric to a quotient of $N\times \Bbb{R}^k,$ where $N$ is an Einstein manifold and $f=\frac{\lambda}{2}|x|^2$ on the Euclidean factor. Any rigid gradient Ricci soliton has constant scalar curvature. A special family of manifolds with constant scalar curvature are the homogeneous ones. In this context, it is known that any homogeneous gradient Ricci soliton is rigid (see \cite{PW1}). However, as it was previously mentioned, the complete classification of gradient Ricci solitons with constant scalar curvature still remains open.

The next lemma summarizes some useful well-known results on the theory of closed conformal vector fields (cf. Ross and Urbano \cite[Lemma 1]{RosUrbano}, see also Castro, Montealegre and Urbano \cite{Castro/Montealegre/Urbano}).

\begin{lemma}
\label{lemConf}
Let $(M^n,\,g)$ be a Riemannian manifold and $X$ a closed conformal vector field. Then the following assertions hold:
\begin{itemize}
\item[(1)] The set $\mathcal{Z}(X)$ of zeros of $X$ is a discrete set.
\item[(2)] $\nabla|X|^2=2\psi X.$
\item[(3)] $\nabla^2|X|^2=2\psi^2g+2d\psi\otimes X^\flat.$
\item[(4)] $|X|^{2}\nabla \psi=X(\psi) X.$
\item[(5)] The Ricci tensor satisfies ${\rm Ric}(X)=-(n-1)\nabla \psi .$
\end{itemize}
\end{lemma}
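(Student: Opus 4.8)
The plan is to read off all five assertions from the single structural equation \eqref{closedEq}, $\nabla_YX=\psi Y$, which already records the full covariant derivative of $X$; only the first assertion requires genuine work.

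The three ``algebraic'' identities come from differentiation. For (2), $Y(|X|^2)=2g(\nabla_YX,X)=2\psi\,g(X,Y)$ gives $\nabla|X|^2=2\psi X$; differentiating this covariantly and invoking \eqref{closedEq} once more, $\nabla_Z(\nabla|X|^2)=2(Z\psi)X+2\psi^2Z$, which, after lowering an index, is (3). Symmetry of $\nabla^2|X|^2$ then forces $d\psi\otimes X^\flat$ to be a symmetric tensor; contracting that symmetry relation with $X$ yields $|X|^2\nabla\psi=X(\psi)X$, i.e. (4). For (5) I would substitute \eqref{closedEq} into the curvature operator: since $\nabla_ZX=\psi Z$ for every $Z$, a short computation gives $R(Y,Z)X=(Y\psi)Z-(Z\psi)Y$, and tracing over $Y$ produces ${\rm Ric}(X,Z)=-(n-1)\,d\psi(Z)$. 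This same curvature identity gives (4) a second way: setting $Y=X$ and pairing with $X$, the left side $g(R(X,Z)X,X)$ vanishes by skew-symmetry of $R$ in its last two arguments, leaving $(Z\psi)|X|^2=(X\psi)\,g(X,Z)$ for all $Z$.

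The part needing real work is (1), for which I would use an ODE argument along geodesics emanating from a zero. Suppose $X\not\equiv0$ on the connected manifold $M$ and that some $p$ is a limit point of the closed set $\mathcal Z(X)$. By (3), ${\rm Hess}(|X|^2)(p)=2\psi(p)^2g$, which cannot be positive definite — otherwise $p$ would be a strict local minimum of the nonnegative function $|X|^2$ and hence an isolated zero — so $\psi(p)=0$. Fix a geodesic $\gamma$ with $\gamma(0)=p$ and write $X\circ\gamma=F\gamma'+N$ with $N\perp\gamma'$; \eqref{closedEq} forces $F'=\psi\circ\gamma$ and $\tfrac{D}{dt}N=0$, so $N$ is parallel with $N(0)=X(p)=0$, whence $N\equiv0$ and $X\circ\gamma=F\gamma'$. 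Feeding this into (5) gives $(\psi\circ\gamma)'=-\tfrac{1}{n-1}{\rm Ric}(X,\gamma')=-\tfrac{F}{n-1}{\rm Ric}(\gamma',\gamma')$, so the pair $(F,\psi\circ\gamma)$ solves a linear homogeneous ODE system with vanishing initial data $F(0)=g(X(p),\gamma'(0))=0$ and $\psi(\gamma(0))=0$; by uniqueness it vanishes identically, hence $X$ vanishes on an entire normal neighborhood of $p$. Finally, an open--closed argument finishes the proof: the set of points possessing a neighborhood on which $X$ vanishes is open and nonempty, and it is also closed, since any boundary point $q$ satisfies $X(q)=0$ and (being again a limit point of $\mathcal Z(X)$, or directly by continuity) $\psi(q)=0$, so the argument above reruns at $q$. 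Connectedness then gives $X\equiv0$, a contradiction; thus $\mathcal Z(X)$ has no limit point.

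The computations behind (2)--(5) are routine; the place where care is needed is the last step — isolating the two features that make the geodesic equations linear and homogeneous, namely that the component of $X$ normal to $\gamma$ is parallel (hence zero), and that (5) expresses the derivative of $\psi$ along $\gamma$ linearly in $F$ — and then verifying that the open--closed propagation really does carry the vanishing of $X$ to all of $M$ with no completeness hypothesis.
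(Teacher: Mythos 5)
Your proof is correct. Note first that the paper does not prove this lemma at all: it is quoted as known background from Ros--Urbano \cite{RosUrbano} (see also \cite{Castro/Montealegre/Urbano}), so there is no argument in the text to compare against; what you have written is essentially the standard self-contained derivation from those references. Items (2)--(5) do follow exactly as you say from the single equation $\nabla_YX=\psi Y$: (2) and (3) by covariant differentiation, (4) from the symmetry of the Hessian in (3) contracted with $X$ (your second derivation via $g(R(X,Z)X,X)=0$ is also fine), and (5) by tracing $R(Y,Z)X=(Y\psi)Z-(Z\psi)Y$. For (1), your two-step argument is the right one and is where the content lies: the Hessian computation shows that a zero with $\psi(p)\neq0$ is a nondegenerate strict minimum of $|X|^2$, hence isolated; at a zero with $\psi(p)=0$ the normal component $N$ of $X$ along a geodesic is parallel with zero initial value, and the pair $(F,\psi\circ\gamma)$ then satisfies the first-order linear homogeneous system $F'=\psi\circ\gamma$, $(\psi\circ\gamma)'=-\tfrac{F}{n-1}\mathrm{Ric}(\gamma',\gamma')$ with vanishing initial data, so $X$ vanishes on a full normal neighborhood, and the open--closed argument propagates this to all of $M$. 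The only caveats you should make explicit are the standing hypotheses that $M$ is connected and $X\not\equiv0$ (otherwise $\mathcal{Z}(X)=M$ and (1) fails trivially); as you observe, no completeness is needed since the ODE runs only along geodesics inside a normal neighborhood.
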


In \cite{joao}, Silva showed that every Ricci soliton endowed with a non-parallel homothetic closed vector field is a gradient Ricci soliton and has zero scalar curvature. Hence, by using Lemma \ref{lem1} (3) we obtain the following result.\\

\begin{proposition}
\label{propA}
Every Ricci soliton (not necessarily gradient) carrying a non-parallel homothetic closed conformal vector field is Ricci flat.
\end{proposition}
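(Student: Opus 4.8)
The plan is to reduce the statement to the gradient case, where Lemma \ref{lem1} is available, and then to exploit that the scalar curvature is not merely constant but identically zero. First I would unwind the hypothesis: a homothetic closed conformal vector field $X$ satisfies $\nabla_Y X=\psi Y$ for all $Y\in\mathfrak{X}(M)$ with $\psi$ a constant, and \emph{non-parallel} means $\psi\neq 0$. The crucial input is the result of Silva \cite{joao} recalled just above: a Ricci soliton carrying such a field is automatically a \emph{gradient} Ricci soliton, say $(M^n,g,f,\lambda)$, and has $R\equiv 0$. This is precisely what allows me to pass from the a priori non-gradient setting of the statement into the framework of Lemma \ref{lem1}.

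Next, with $R$ constant (indeed $R\equiv 0$), I would substitute into Lemma \ref{lem1}(3),
\[
\Delta R=\langle \nabla R,\nabla f\rangle +2\lambda R-2|{\rm Ric}|^{2}.
\]
Since $\nabla R=0$, $\Delta R=0$ and $R=0$, the right-hand side collapses to $-2|{\rm Ric}|^{2}$, forcing $|{\rm Ric}|^{2}\equiv 0$, i.e. $(M^n,g)$ is Ricci flat. No completeness assumption and no sign hypothesis on $\lambda$ enters this step.

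The only genuine content of the argument is the reduction step, and that is also where all the work sits — proving that the soliton is gradient and scalar-flat from the existence of a non-parallel homothetic closed conformal field. But this is exactly what Silva's theorem in \cite{joao} supplies, so I would simply cite it rather than reprove it. (If one wished to argue in place, Lemma \ref{lemConf}(5) already gives ${\rm Ric}(X)=0$ because $\psi$ is constant, and one could try to combine $\nabla_Y X=\psi Y$ with the soliton equation for the given vector field $V$ to recover that $X$ is, up to rescaling, a gradient potential and that $R$ must vanish; however, invoking \cite{joao} is cleaner and I would not redo this.) Hence the anticipated obstacle is essentially absent in the presence of the quoted results, and the proof is short.
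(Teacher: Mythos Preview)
Your argument is correct and matches the paper's own proof essentially line for line: invoke Silva's result \cite{joao} to reduce to a gradient soliton with $R\equiv 0$, then plug $R=0$ into Lemma~\ref{lem1}(3) to force $|{\rm Ric}|^2=0$. There is nothing to add.
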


\section{Proof of Theorem1}
Throughout this section we will present the proof of Theorem \ref{thmA}. To begin with, we are going to obtain the following characterization of gradient Ricci solitons carrying a non-parallel closed conformal vector field; see also \cite{Silva1}. 

\begin{lemma}\label{lemA}
Let $(M^n, g, \nabla f, \lambda)$ be a gradient Ricci soliton admitting a non-parallel closed conformal vector field $X.$ Then one of the following assertions holds:
\begin{enumerate}
\item The gradient of $f$ satisfies $|X|^2\nabla f = X(f)X.$
\item $(M^n, g)$ is isometric to the Euclidean space $\Bbb{R}^n$.
\end{enumerate}
\end{lemma}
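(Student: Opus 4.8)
The plan is to set $Z:=|X|^{2}\nabla f-X(f)X$ and, on the open dense set $M\setminus\mathcal Z(X)$ (recall $\mathcal Z(X)$ is discrete by Lemma \ref{lemConf}(1)), $W:=\nabla f-\frac{X(f)}{|X|^{2}}X$, so that $Z=|X|^{2}W$ there and $W\perp X$. Assertion $(1)$ says precisely $Z\equiv 0$, and since $Z$ vanishes at every zero of $X$, $Z\equiv 0$ is equivalent to $W\equiv 0$ on $M\setminus\mathcal Z(X)$. So it suffices to show: if $W\not\equiv 0$ then $(M^{n},g)$ is isometric to $\mathbb R^{n}$ (using, as in Theorem \ref{thmA}, that $M$ is complete).

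I would first gather identities. Differentiating $X(f)=g(X,\nabla f)$ and using $\nabla_{Y}X=\psi Y$, the soliton equation \eqref{maineq}, and Lemma \ref{lemConf}(5), one obtains
\begin{equation*}
\nabla\big(X(f)\big)=\psi\nabla f+\lambda X+(n-1)\nabla\psi ,\qquad \nabla_{X}\nabla f=\lambda X+(n-1)\nabla\psi .
\end{equation*}
On $M\setminus\mathcal Z(X)$, Lemma \ref{lemConf}(4) gives $\nabla\psi=\alpha X$ with $\alpha:=X(\psi)/|X|^{2}$, so these read $\nabla(X(f))=\psi\nabla f+\mu X$ and $\nabla_{X}\nabla f=\mu X$, where $\mu:=\lambda+(n-1)\alpha$. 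I would also note that $|X|$, $\psi$, and $X(\psi)$ (hence $\alpha$ and $\mu$) are constant along the leaves of $X^{\perp}$, i.e.\ their gradients are multiples of $X$: for $|X|$ and $\psi$ this is Lemma \ref{lemConf}(2),(4), and for $X(\psi)$ it follows from $Y\big(X(\psi)\big)=\mathrm{Hess}\,\psi(X,Y)=0$ for $Y\perp X$, which one checks by computing $\nabla_{X}\nabla\psi$ from $\nabla\psi=\alpha X$.

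The heart of the argument is a second differentiation. Write $N:=X/|X|$. Using the soliton equation and $\mathrm{Ric}(X)\parallel X$ one gets $\mathrm{Ric}(N,Y)=0$ and hence $\mathrm{Hess}\,f(N,Y)=0$ for $Y\perp X$; combined with the leaf-constancy of $\psi$ and $\mu$, a short computation then gives $\mathrm{Hess}\big(X(f)\big)(N,Y)=0$ for $Y\perp X$ as well. Now differentiate the vector identity $\nabla(X(f))-\psi\nabla f-\mu X=0$ in the direction $N$ and pair the result with an arbitrary $Y\perp X$: every term cancels except one, leaving
\begin{equation*}
N(\psi)\,g(W,Y)=0\qquad\text{for all }Y\perp X .
\end{equation*}
Taking $Y=W$ yields $N(\psi)\,|W|^{2}=0$ on $M\setminus\mathcal Z(X)$. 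Organizing these cancellations correctly is the main technical obstacle.

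To finish: if $W\not\equiv 0$, then $W\neq 0$ on a nonempty open set $U\subseteq M\setminus\mathcal Z(X)$, so $N(\psi)=0$ there, and since $\nabla\psi\parallel X$ this forces $\nabla\psi\equiv 0$ on $U$, i.e.\ $\psi$ is constant on $U$. Gradient Ricci soliton metrics are real-analytic in harmonic coordinates, hence the closed conformal field $X$ and $\psi=\frac1n\,\mathrm{div}\,X$ are real-analytic, so $\psi$ is constant on all of (connected) $M$; as $X$ is non-parallel this constant $\psi_{0}$ is nonzero. Then Lemma \ref{lemConf}(3) collapses to $\nabla^{2}|X|^{2}=2\psi_{0}^{2}\,g$, and $|X|^{2}$ is non-constant (otherwise $\nabla|X|^{2}=2\psi_{0}X\equiv 0$). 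Since a complete Riemannian manifold carrying a non-constant function whose Hessian is a positive constant multiple of the metric is isometric to Euclidean space (Tashiro \cite{Tashiro}; see also \cite{PRRS}), we conclude $(M^{n},g)\cong\mathbb R^{n}$, which is assertion $(2)$. Besides the computation above, the one point requiring care is this last use of real-analyticity, needed to pass from ``$\psi$ locally constant'' to ``$\psi$ constant''.
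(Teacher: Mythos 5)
Your proposal is correct and follows essentially the same route as the paper: the identity $\nabla(X(f))=\psi\nabla f+\lambda X+(n-1)\nabla\psi$, the symmetry of ${\rm Hess}\,(X(f))$ (which you exploit by evaluating it on $(N,Y)$ versus $(Y,N)$, where the paper instead isolates the non-manifestly-symmetric term $d\psi\otimes df$) to obtain $X(\psi)\big(|X|^2\nabla f-X(f)X\big)=0$, and then real-analyticity plus Tashiro's theorem in the constant-$\psi$ case. The two caveats you flag — completeness being needed for Tashiro, and analyticity of $X$ and $\psi$ being needed to globalize — are present in, and handled the same way as, the paper's own proof.
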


\begin{proof}
First, we claim that $$T = |X|^2d\psi \otimes df$$ is a symmetric $2$-tensor. Indeed, for an arbitrary vector field $Y$ on $M^n$ we have
\begin{eqnarray*}
YX(f) &=& \langle \nabla_Y \nabla f, X \rangle + \langle \nabla f, \nabla_Y X\rangle \\ 
&=& Hess\,f(X, Y) + \psi\langle \nabla f, Y\rangle\\
&=& \lambda \langle X, Y\rangle + (n-1)\langle \nabla \psi, Y\rangle 
+ \psi\langle \nabla f, Y\rangle,
\end{eqnarray*} where we used the fundamental equation (\ref{maineq}) jointly with Lemma \ref{lemConf} (5). The last equation shows that
\begin{eqnarray*}
\nabla X(f) = \lambda X + (n-1) \nabla \psi + \psi\nabla f,
\end{eqnarray*}
Differentiating it gives
\begin{eqnarray*}
\hspace{0,5cm} Hess\,X(f) = \lambda\psi g + (n-1)Hess\,\psi + \psi Hess\, f + d\psi\otimes df,
\end{eqnarray*} which immediately shows that $T$ is a symmetric $2$-tensor.

Using the fourth item of Lemma \ref{lemConf}, one can re-write $T$ as
\begin{eqnarray*}
T = X(\psi)X^{\flat} \otimes df.
\end{eqnarray*} Moreover, taking into account that $T$ is symmetric we infer
\begin{eqnarray*}
X(\psi)\Big(|X|^2\langle \nabla f, Y\rangle - X(f)\langle X, Y \rangle\Big) = 0,
\end{eqnarray*} which reduces to
\begin{eqnarray*}
X(\psi)\Big(|X|^2\nabla f - X(f) X\Big) = 0.
\end{eqnarray*} Next, recalling that  every gradient Ricci soliton has analytic metric (Ivey \cite[Theorem 1]{Ivey}) we conclude that either
\begin{eqnarray}\label{P1.1}
X(\psi) = 0
\end{eqnarray} 
or
\begin{eqnarray}\label{P1.2}
|X|^2\nabla f = X(f)X,
\end{eqnarray} where (\ref{P1.2}) is exactly the first assertion.

Now we must pay attention to (\ref{P1.1}). Indeed, in conjunction with Lemma \ref{lemConf} (4), it implies
\begin{eqnarray*}
|X|^2\nabla\psi = 0,
\end{eqnarray*} 
and consequently
\begin{eqnarray*}
\nabla\psi = 0,
\end{eqnarray*} for all non-singular points of $X.$ Thus, from Lemma \ref{lemConf} (1) we conclude, by continuity, that the last identity holds everywhere and therefore, $\psi$ is constant.

Finally, taking into account that $X$ is non-parallel, we define the function $h: M^n \rightarrow \Bbb{R}$ by 
\begin{eqnarray*}
h = \frac{1}{2\psi}|X|^2.
\end{eqnarray*}
A straightforward computation using this function, in conjunction with Lemma \ref{lemConf} (3) shows that
\begin{eqnarray*}
Hess\, h = \psi g.
\end{eqnarray*}
Hence, we invoke Theorem 2 of \cite{Tashiro} to conclude that $(M^n, g)$ is isometric to Euclidean space, completing the proof of the lemma.
\end{proof}

\vspace{0.4cm}

Now we are ready to conclude the proof of Theorem \ref{thmA}. First, we suppose that $M^n$ is a gradient Ricci soliton carrying a non-parallel closed conformal vector field $X$. Then, it follows by Lemma \ref{lemA} that 
\begin{equation}
\label{eq1p}
|X|^{2}\nabla f=X(f)X,
\end{equation}
or ($M^n,g$) is isometric to the Euclidean space $\Bbb{R}^n$. Hereafter, since $M^n$ has constant scalar curvature we  use Lemma \ref{lem1} (2) to infer

$$Ric(|X|^{2}\nabla f)=|X|^{2}Ric(\nabla f)=\frac{1}{2}|X|^{2}\nabla R=0.$$ Using this consequence in (\ref{eq1p}) provides $X(f)Ric(X)=0$. Consequently, $$|X|^{2}X(f)Ric(X)=0.$$ Now, using Lemma \ref{lemConf} (5) we obtain $|X|^{2}X(f)\nabla \psi=0$ and then by Lemma \ref{lemConf} (4), it follows that $X(f)X(\psi)X=0.$ So, it follows from (\ref{eq1p}) that $$X(\psi)|X|^{2}\nabla f=0.$$  This implies that $X(\psi)\nabla f=0$ in $M\setminus\mathcal{Z}(X),$ where $\mathcal{Z}(X)$ is the set of zeros of $X.$ Taking into account that $\mathcal{Z}(X)$ is a discrete set, we conclude 
\begin{equation}
\label{eqAq}
X(\psi)\nabla f=0\,\,\,\hbox{on}\,\,\,M^n.
\end{equation} Indeed, (\ref{eqAq}) is trivially satisfied for the zeros of $X.$ 

Proceeding further, since any gradient Ricci soliton is analytic in harmonic coordinates, $\nabla f$ can not vanish on any nonempty open dense subset of $M^n$, otherwise, it will be Einstein and $f$ will be constant. Thus, we  use (\ref{eqAq}) to conclude that $X(\psi)=0$. Using  Lemma \ref{lemConf} (4) again, we deduce that $\nabla \psi=0,$ and this therefore forces $\psi$ to be constant. Now, we define the function $\varphi: M^n \rightarrow \Bbb{R}$ by
\begin{eqnarray*}
\varphi = \frac{1}{2\psi}|X|^2.
\end{eqnarray*} Computing its Hessian, and using Lemma \ref{lemConf} (3) we obtain
\begin{eqnarray*}
Hess\, \varphi = \psi g.
\end{eqnarray*}
Thus, it suffices to apply Theorem 2 of \cite{Tashiro} to conclude that $(M^n,\,g)$ is isometric to Euclidean space $\Bbb{R}^n.$

Finally, we turn our attention to the case when $M^n$ is Einstein   and $f$ is constant. By Theorem G of Kanai \cite{Kanai}, we know for the positively Einstein case that $M^n$ is isometric to a Euclidean sphere, and for the Ricci-flat case that it is isometric to the Euclidean space $\Bbb{R}^n$. For the negatively Einstein case, i.e. $R < 0$. Let us define $k$ by $R=n(n-1)k$. Then $k < 0$. By virtue of part (ii) of Theorem G of \cite{Kanai}, we find that ($M^n,g$) is isometric to the warped product $(\bar{M},\bar{g})_{\xi}\times (R,g_0)$ of a complete Einstein manifold $(\bar{M},\bar{g})$ of constant scalar curvature 4$(n-1)(n-2)kc_1 c_2$ and the real line $(R,g_0)$, warped by $\xi(t)=c_1 e^{\sqrt{-k}t}+c_2 e^{-\sqrt{-k}t}$ such that $c_1$ and $c_2$ are non-negative constants. In particular, for dimension 4, the fiber $\bar{M}$ is 3-dimensional, and hence has zero Weyl tensor. By Gauss and Codazzi equations, it follows by a straightforward calculation that $M$ is conformally flat, and as it is negatively Einstein, it has constant negative curvature. This completes the proof.

\subsection{Proof of Corollary 1} 
\begin{proof} As $X$ is non-homothetic, the case (1)  of Theorem 1 are ruled out. So, it would suffice to rule out the case (3). Let us decompose the closed conformal vector field $X$ orthogonally as $\bar{X}+\alpha T$ where $\bar{X}$ is the component along $\bar{M}$ and $T$ is the lift of $\frac{d}{dt}$ on $M^n$. The components $\nabla_{T}X=\psi T$ of the closed conformal equation, the warped product formulas $\nabla_T \bar{X}=\frac{\dot \xi}{\xi}\bar{X}$ and $\nabla_T T = 0$, and comparison of components tangent to $\bar{M}$ and $R$ lead to $\dot{\alpha}=\psi$ and $\dot \xi \bar{X}= 0$. So, either $\bar{X}=0$ or $\dot{\xi}=0$ on an open sub-interval of $R$, which is not possible because $\xi(t)=c_1 e^{\sqrt{-k}t}+c_2 e^{-\sqrt{-k}t}$. Hence $\bar{X}=0$ and thus $X = \alpha T$. Now, using $\nabla_{\bar{Y}}X = \psi \bar{Y}$, where $\bar{Y}$ is an arbitrary vector field tangent to $\bar{M}$ and comparing components along $\bar{M}$ and $R$ shows that $\alpha$ is a function of $t$ alone, and $\psi = \alpha \frac{\dot{\xi}}{\xi}$. Consequently, we find $\dot{\alpha}=\alpha \frac{\dot{\xi}}{\xi}$ which easily integrates as $\alpha = \xi$ (the constant factor due to integration can be absorbed by the constants in $\xi$). Therefore, we obtain $X = \xi T$, and so the norm of $X$ is $\xi$ which is unbounded and this contradicts our assumption that $X$ has bounded norm. This completes the proof.
\end{proof}

\subsection{Proof of Corollary 2} 
\begin{proof}If $\psi$ is a non-zero constant, then as shown in the proof of Theorem 1, $M^n$ is isometric to a Euclidean space. If $\psi$ is non-constant, then as shown in the proof of Theorem 1, $f$ is constant and $g$ is Einstein. The constancy of $R$ and Lemma \ref{lem1} (3) implies that $|Ric|^{2}=\lambda R$. Thus, the hypothesis $\lambda > 0$ implies that $R > 0$, because $R = 0$ would imply Ricci flatness and $\lambda =0$, contradicting our hypothesis. As $g$ is Einstein, Lemma \ref{lemConf} (5) provides $X = -\frac{n(n-1)}{R}\nabla \psi$, i.e. $X$ is gradient. Differentiating the foregoing equation we obtain $Hess \ \psi = -\frac{R}{n(n-1)}g$. This implies, by Obata's theorem \cite{Obata}, that $(M^n, g)$ is isometric to the round sphere $\Bbb{S}^n$, completing the proof.
\end{proof}

\section{K\"ahler Gradient Ricci Soliton}

When the underlying manifold is a complex manifold, we have the corresponding notion of a K\"ahler-Ricci soliton. Recall that a complex $n$-dimensional (real $2n$-dimensional) Riemannian manifold $(M,\,g)$ with a complex structure $J: TM \rightarrow TM$ defined by $J^2 = -I$, is a K\"ahler manifold provided the metric $g$ is $J$-invariant (Hermitian), i.e. $g(JY,JZ) = g(Y,Z)$ for arbitrary real vector fields $Y,Z$, and $J$ is parallel with respect to the Riemannian connection $\nabla$ of $g$, i.e. $\nabla J = 0$. The metric $g$ is called the K\"ahler metric and its Ricci tensor is $J$-invariant, i.e. $Ric \circ J = J \circ Ric$.  Following Chow et al. (p. 97, \cite{Chow}), we state the following definition.
\begin{definition}A K\"ahler-Ricci soliton is a K\"ahler manifold ($M,g,J$) such that the soliton structure equation
\begin{equation}\label{T2.1}
\mathcal{L}_V g +2Ric=2\lambda g
\end{equation}
holds for a real constant $\lambda$ and a vector field $V$ that is an infinitesimal automorphism of the complex structure $J$, i.e. 
\begin{equation}\label{T2.2}
\mathcal{L}_V J = 0.
\end{equation}
\end{definition}
One may note here that the condition (\ref{T2.2}) is automatically satisfied for a gradient K\"ahler-Ricci soliton (for which $V = \nabla f$ for a smooth function $f$ on $M$). Now we state our next result as follows. 

\begin{proposition}
If $(M^{2n},\, g,\, \nabla f,\, \lambda),$ $2n\geq 4,$ is a gradient Kaehler Ricci soliton real dimension $2n>4$, with a non-parallel closed conformal real vector field $X$, then (i )$ X$ is homothetic and an infinitesimal automorphism of the complex structure, and (ii) it is Ricci-flat and is flat in dimension 4. 
\end{proposition}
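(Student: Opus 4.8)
The plan is to play the complex structure $J$ off against the closed conformal vector field $X$. The starting point is that, since $\nabla J=0$, differentiating $JX$ gives $\nabla_Y(JX)=(\nabla_YJ)X+J(\nabla_YX)=\psi\,JY$ for every $Y\in\mathfrak{X}(M)$; thus $JX$ is again a closed conformal vector field, with \emph{the same} conformal factor $\psi$. Applying Lemma~\ref{lemConf}(4) to $X$ yields $|X|^2\nabla\psi=X(\psi)X$, while applying it to $JX$ and using $|JX|^2=|X|^2$ yields $|X|^2\nabla\psi=\langle\nabla\psi,JX\rangle JX$. Since $g(X,JX)=0$ and both $X$ and $JX$ are nonzero off the discrete set $\mathcal{Z}(X)$ (Lemma~\ref{lemConf}(1)), pairing the second equation with $X$ forces $X(\psi)=0$, whence $\nabla\psi=0$, on $M\setminus\mathcal{Z}(X)$; by continuity $\nabla\psi=0$ on all of $M^{2n}$, and as $M$ is connected $\psi$ is constant. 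It is nonzero because $X$ is non-parallel; that is, $X$ is homothetic.

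Next I would check directly that $X$ preserves $J$. For any vector field $Y$,
\begin{align*}
(\mathcal{L}_XJ)(Y)&=[X,JY]-J[X,Y]
=\big(\nabla_X(JY)-\nabla_{JY}X\big)-J\big(\nabla_XY-\nabla_YX\big)\\
&=\big(J\nabla_XY-\psi\,JY\big)-\big(J\nabla_XY-\psi\,JY\big)=0,
\end{align*}
where I used $\nabla J=0$ together with the closed conformal equation $\nabla_ZX=\psi Z$. (This step does not even require $\psi$ to be constant.) This establishes part~(i).

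For part~(ii), $X$ is now a non-parallel \emph{homothetic} closed conformal vector field, so Proposition~\ref{propA} applies directly and gives that $(M^{2n},g)$ is Ricci-flat, hence Calabi--Yau since it is K\"ahler. To upgrade this to flatness when the real dimension is $4$, I would use that, $\psi$ being constant, the standard computation of the curvature of a closed conformal field collapses to $R(Y,Z)X=d\psi(Y)Z-d\psi(Z)Y=0$, and therefore $R(Y,Z)(JX)=J\,R(Y,Z)X=0$ as well. At a point $p$ with $X(p)\neq0$, pick an orthonormal frame $e_1=X/|X|,\ e_2=JX/|X|,\ e_3,e_4$ at $p$. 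The vanishing of $R(\cdot,\cdot)X$ and $R(\cdot,\cdot)(JX)$ kills every component of the curvature tensor carrying the index $1$ or the index $2$, leaving only the sectional curvature $K$ of the plane spanned by $e_3,e_4$; but then $0=\mathrm{Ric}(e_3,e_3)$ equals exactly that sectional curvature (the remaining two terms already vanishing), so the curvature tensor vanishes at $p$. Since such points form a dense open subset of $M^4$, the metric is flat.

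The crux of the whole argument is the opening observation that $JX$ is closed conformal with the \emph{same} factor, combined with the orthogonality $X\perp JX$; once $\psi$ is known to be constant, everything else is either a one-line computation or an appeal to Proposition~\ref{propA}. The only place needing genuine care is the dimension-$4$ step: one must organize the curvature bookkeeping above and observe that it really is special to dimension $4$, since in real dimension $2n>4$ the complementary block $\mathrm{span}\{e_3,\dots,e_{2n}\}$ has dimension at least $4$ and Ricci-flatness no longer forces the curvature there to vanish, leaving only the Calabi--Yau conclusion.
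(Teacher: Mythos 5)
The opening move of your argument is incorrect: $JX$ is \emph{not} a closed conformal vector field. Since $\nabla J=0$, the correct computation is $\nabla_Y(JX)=J(\nabla_YX)=\psi\,JY$, not $\psi\,Y$; as $\psi J$ is skew-symmetric, this makes $JX$ a Killing field rather than a closed conformal one, so Lemma \ref{lemConf} (4) cannot be applied to $JX$ and the identity $|X|^2\nabla\psi=\langle\nabla\psi,JX\rangle JX$ on which your proof of constancy of $\psi$ rests is unjustified. A sanity check confirms something must be wrong: your argument nowhere uses $2n\ge 4$, yet on the round $S^2$ (a K\"ahler manifold) the field $X=\nabla h$, with $h$ the restriction of a linear function, satisfies $\nabla_YX=-hY$ and is closed conformal with non-constant factor. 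Part (i) is exactly Goldberg's lemma, which the paper simply cites; a correct direct proof must go through the curvature. From $\nabla_YX=\psi Y$ one gets $R(Y,Z)X=d\psi(Y)Z-d\psi(Z)Y$, and the K\"ahler identity $R(JY,JZ)=R(Y,Z)$ then yields $d\psi(JY)JZ-d\psi(JZ)JY=d\psi(Y)Z-d\psi(Z)Y$; choosing $Y=\nabla\psi$ and $0\neq Z\perp\{\nabla\psi,J\nabla\psi\}$ (possible precisely because $2n\ge 4$) forces $|\nabla\psi|^2Z=0$, hence $\nabla\psi=0$ and $\psi$ is constant.

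The rest of your proof is sound and close to the paper's. Your verification that $\mathcal{L}_XJ=0$ is correct (and, as you note, does not need $\psi$ constant). In part (ii), the appeal to Proposition \ref{propA} for Ricci-flatness is exactly the paper's step, and your dimension-$4$ argument — $R(\cdot,\cdot)X=0$, hence $R(\cdot,\cdot)(JX)=0$, so in the frame $\{X/|X|,JX/|X|,e_3,e_4\}$ only the $e_3e_4$ sectional curvature survives and Ricci-flatness kills it, with the discrete zero set handled by continuity — is a clean and arguably more transparent version of the paper's ``combinatorial computation'' with the traceless Weyl tensor. Once the proof that $\psi$ is constant is repaired (or Goldberg's lemma is cited, as in the paper), everything after it stands.
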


Its proof uses the following known result(Goldberg \cite[p. 265]{Goldberg}).

\begin{lemma} A closed conformal vector field on a K\"ahler manifold of real dimension $2n \ge 4$ is homothetic and an infinitesimal automorphism of the complex structure.
\end{lemma}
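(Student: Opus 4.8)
The plan is to establish the two assertions of the lemma separately: that $X$ is an infinitesimal automorphism of $J$, which is a purely formal consequence of the parallelism of $J$, and that the conformal factor $\psi$ is constant (the homothety), which will require the Kähler symmetry of the Ricci tensor together with the dimensional hypothesis $2n\ge 4$. Throughout I denote the closed-conformal equation by $\nabla_Y X=\psi Y$ and use the Kähler facts $\nabla J=0$, the skew-adjointness $g(JY,Z)=-g(Y,JZ)$, and the $J$-invariance of the Ricci operator, ${\rm Ric}\circ J=J\circ {\rm Ric}$.

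First I would dispose of $\mathcal{L}_X J=0$. Expanding the Lie derivative through the torsion-free connection gives $(\mathcal{L}_X J)Y=[X,JY]-J[X,Y]=(\nabla_X J)Y-\nabla_{JY}X+J\nabla_Y X$. Since $(M,g,J)$ is Kähler the term $(\nabla_X J)Y$ vanishes, and the closed-conformal equation turns the remaining two terms into $-\psi JY+\psi JY=0$. Hence $\mathcal{L}_X J=0$, so $X$ is an infinitesimal automorphism of the complex structure; note that this part uses neither the Ricci tensor nor the dimension.

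For the homothety I would exploit the vector field $JX$. Differentiating and using $\nabla J=0$ gives $\nabla_Y(JX)=J\nabla_Y X=\psi JY$, and since $J$ is skew-adjoint the operator $Y\mapsto \psi JY$ is skew-symmetric, so $JX$ is a Killing field. Computing its curvature directly from $\nabla_Y(JX)=\psi JY$ yields $R(Y,Z)(JX)=d\psi(Y)JZ-d\psi(Z)JY$, and contracting produces ${\rm Ric}(JX)=-J\nabla\psi$; the crucial point in this contraction is that the trace term carries $\operatorname{tr}J=0$ rather than $\operatorname{tr}I=2n$. On the other hand, the closed-conformal identity of Lemma \ref{lemConf}(5), applied in real dimension $2n$, reads ${\rm Ric}(X)=-(2n-1)\nabla\psi$; applying $J$ and invoking ${\rm Ric}\circ J=J\circ {\rm Ric}$ gives the competing expression ${\rm Ric}(JX)=-(2n-1)J\nabla\psi$.

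Comparing the two formulas for ${\rm Ric}(JX)$ forces $(2n-2)J\nabla\psi=0$, and here the hypothesis $2n\ge 4$ enters decisively, since $2n-2\ne 0$; invertibility of $J$ (with $J^{-1}=-J$) then gives $\nabla\psi=0$ at every point, so $\psi$ is constant and $X$ is homothetic. The main obstacle, and the only place the computation must be carried out with care, is the curvature contraction yielding ${\rm Ric}(JX)=-J\nabla\psi$ with the correct sign: its mismatch by the factor $2n-1$ with the Kähler-transformed closed-conformal identity is exactly what isolates the role of the dimension. I would emphasize that the real-dimension-two case is genuinely excluded, since there $2n-2=0$ and the argument collapses, in agreement with the stated hypothesis $2n\ge 4$.
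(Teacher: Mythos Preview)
Your proof is correct and self-contained. The paper, however, does not prove this lemma at all: it quotes it as a known result from Goldberg's \emph{Curvature and Homology} (p.~265) and then uses it as a black box in the proof of the subsequent Proposition. So there is no ``paper's own proof'' to compare against; you have supplied what the paper omits.

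Your argument is essentially the standard one. The computation $\mathcal{L}_X J=0$ via $\nabla J=0$ and $\nabla_Y X=\psi Y$ is routine, and the key step---computing ${\rm Ric}(JX)$ two ways, once from the curvature identity $R(Y,Z)JX=d\psi(Y)JZ-d\psi(Z)JY$ and once from the $J$-invariance ${\rm Ric}\circ J=J\circ{\rm Ric}$ applied to Lemma~\ref{lemConf}(5)---is exactly how one isolates the factor $2n-2$ and forces $\nabla\psi=0$ when $2n\ge 4$. Your remark that the contraction for $JX$ picks up $\operatorname{tr}J=0$ in place of $\operatorname{tr}I=2n$ correctly identifies why the coefficient drops from $2n-1$ to $1$.
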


\textbf{Proof of the Proposition.}
As $X$ is homothetic, we invoke Proposition \ref{propA} to conclude that $Ric = 0.$ Hence, it follows from (\ref{closedEq}) that the curvature tensor annihilates $X,$ and therefore, the Weyl conformal tensor $W$ annihilates $X.$ So, if the real dimension of $M$ is $4,$ then a combinatorial computation using the symmetries and complete traceless property of $W$ implies that $W = 0$ at all points other than the zeros of $X.$ But the zeros of $X$ are discrete, and hence using the continuity of $W$ we conclude that $W = 0$ on $M$. But $Ric = 0$, and so $M$ is flat.  This completes the proof.

\begin{remark}A Ricci flat K\"ahler manifold (for which the first Chern class vanishes) is known as a Calabi-Yau manifold which has 
application in superstring theory based on a $10$-dimensional manifold that is the product of the $4$-dimensional space-time
and a $6$-dimensional Calabi-Yau manifold (see Candelas et al. \cite{Candelas}).
\end{remark}
\section{A constant scalar curvature condition for Ricci soliton}
Let ($M,g,V,\lambda$) be a Ricci soliton (not necessarily gradient). Then
\begin{equation}\label{5.1}
(\mathcal{L}_V g)(Y,Z)+2Ric(Y,Z)=2\lambda g(Y,Z)
\end{equation}
for arbitrary $Y,Z \in \mathfrak{X}(M)$. Decomposing $g(\nabla_Y V,Z)$ into symmetric and skew-symmetric parts, and using (\ref{5.1}) we have
\begin{equation}\label{5.2}  
g(\nabla_Y V,Z)=-Ric(Y,Z)+\lambda g(Y,Z) +(dv)(Y,Z)
\end{equation}
where $dv$ is the exterior derivative of the 1-form $v$ metrically equivalent to $V$, i.e. $v(Y)=g(V,Y)$. Let us set $(dv)(Y,Z)=g(FX,Z)$. Then $F$ is a skew-symmetric (1,1)-tensor field, i.e. $g(FY,Z)=-g(Y,FZ)$. Factoring $Z$ out of (\ref{5.2}) gives
\begin{equation}\label{5.3}
\nabla_Y V=-Ric(Y)+\lambda Y +FY
\end{equation}
Tracing it provides
\begin{equation}\label{5.4}
\delta v = -div(V)=R-n\lambda
\end{equation}
where $\delta$ is the co-differential operator. We establish the following result that provides a condition for a Ricci soliton to have constant scalar curvature.

\begin{theorem} Let ($M,g,V,\lambda$) be a Ricci soliton. If the 1-form $v$ metrically equivalent to $V$ is harmonic, then $M$ has constant scalar curvature, and the Ricci operaror annihilates $V$.
\end{theorem}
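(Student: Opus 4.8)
The plan is to exploit the harmonicity of $v$ together with the soliton structure equation (\ref{5.3})--(\ref{5.4}) and a Bochner-type argument. Since $v$ is harmonic, it is both closed ($dv = 0$) and co-closed ($\delta v = 0$). Co-closedness applied to (\ref{5.4}) immediately yields $R = n\lambda$, so the scalar curvature is constant as a first consequence. Closedness, $dv = 0$, means that the skew-symmetric part of $g(\nabla_Y V, Z)$ vanishes, i.e. $F = 0$ in (\ref{5.3}), so that (\ref{5.3}) reduces to the clean identity $\nabla_Y V = -\mathrm{Ric}(Y) + \lambda Y$ for all $Y$. In other words, $V$ is (up to the constant $\lambda$) the ``gradient-like'' vector field $-\mathrm{Ric}$ plus a homothety term, even though $V$ need not itself be a gradient.

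Next I would feed this reduced equation into the contracted second Bianchi identity. Taking the divergence of $\nabla_Y V = -\mathrm{Ric}(Y) + \lambda Y$, or equivalently computing $\mathrm{div}$ of both sides and using $\mathrm{div}\,\mathrm{Ric} = \tfrac12 \nabla R = 0$ (since $R$ is now constant), one finds that $V$ is a harmonic vector field in the strong sense and that the Laplacian of $v$ vanishes in a form that couples to $|\mathrm{Ric}|^2$. Concretely, contracting (\ref{5.3}) with itself and integrating — or, to avoid completeness/compactness assumptions, arguing pointwise via the Bochner formula for the harmonic $1$-form $v$ — gives a relation of the type $0 = \Delta\bigl(\tfrac12 |V|^2\bigr) = |\nabla V|^2 + \mathrm{Ric}(V,V) + \langle V, \Delta_{\mathrm{Hodge}} V\rangle$, and the harmonic hypothesis kills the last term. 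Substituting $\nabla_Y V = -\mathrm{Ric}(Y)+\lambda Y$ to evaluate $|\nabla V|^2$ and $\mathrm{Ric}(V,V)$ should force $\mathrm{Ric}(V) = 0$.

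A cleaner route to the same end, which I would actually carry out, is purely algebraic once $F=0$: differentiate $\nabla_Y V = -\mathrm{Ric}(Y) + \lambda Y$ again and antisymmetrize to read off the curvature $R(X,Y)V$ in terms of $(\nabla_X \mathrm{Ric})(Y) - (\nabla_Y \mathrm{Ric})(X)$; then contract over $X,Y$ using the second Bianchi identity and $\nabla R = 0$ to conclude that $\mathrm{Ric}(V)$ is parallel, and finally use (\ref{5.4}) with $\delta v = 0$ together with an integration (or the analyticity/continuity argument already invoked in the paper for Ricci solitons) to pin it to zero. The main obstacle I anticipate is handling the noncompact, non-gradient case without a global integration: the Bochner argument needs either completeness plus a growth control on $|V|$ or, alternatively, a careful pointwise manipulation showing that the relevant divergence term is an exact divergence that must vanish identically. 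I expect this to be resolvable by the same appeal to analyticity of Ricci-soliton metrics (Ivey) used earlier in the excerpt, reducing the global vanishing of $\mathrm{Ric}(V)$ to its vanishing on an open set, which the algebraic curvature identity supplies.
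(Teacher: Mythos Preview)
Your argument rests on a misreading of ``harmonic'' at the very first step. In this paper (see equation (\ref{5.5}) in the proof), a $1$-form $v$ is harmonic when the Hodge Laplacian vanishes, $\Delta v = d\delta v + \delta dv = 0$; it is \emph{not} assumed that $dv = 0$ and $\delta v = 0$ separately. The equivalence ``harmonic $\Leftrightarrow$ closed and co-closed'' holds on \emph{compact} manifolds via Hodge theory, and no compactness is assumed in the statement. Consequently you cannot conclude $R = n\lambda$ from $\delta v = 0$, nor set $F = 0$ from $dv = 0$; both deductions collapse, and with them the reduced equation $\nabla_Y V = -\mathrm{Ric}(Y) + \lambda Y$ on which the remainder of your sketch depends. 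The Bochner and curvature--antisymmetrization arguments you outline afterwards are already tentative (``should force'', ``I anticipate''), but the real gap is upstream of all that.

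The paper's route is entirely different and never splits $\Delta v = 0$ into closedness and co-closedness. From $\delta v = R - n\lambda$ one gets $d\delta v = dR$, and from $(dv)(Y,Z) = g(FY,Z)$ one gets $\delta dv = \mathrm{div}(F)$, so harmonicity yields the single relation $dR + \mathrm{div}(F) = 0$. Independently, a direct computation from (\ref{5.3}) gives $\mathrm{Ric}(V) = \tfrac12 \nabla R + (\mathrm{div}\,F)^{*}$. The key external input is the Stepanov--Shelepova identity $\square V = 0$ valid for any Ricci soliton, where $\square$ is the Yano operator with components $-(\nabla^j\nabla_j V^i + R^i_{\ j} V^j)$; combined with the Weitzenb\"ock form of $\Delta v = 0$, namely $-(\nabla^j\nabla_j V^i - R^i_{\ j} V^j) = 0$, subtraction gives $\mathrm{Ric}(V) = 0$ immediately. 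Feeding this back into the two displayed relations then forces $\nabla R = 0$. No Bochner integration, no analyticity appeal, and no growth control on $|V|$ is needed.
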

\begin{proof}As $v$ is harmonic,
\begin{equation}\label{5.5}
 \Delta v=d\delta v +\delta dv = 0,
\end{equation}
where $\Delta$ is the Hodge Laplacian operator. Also, $(dv)(Y,Z)=g(FY,Z)$ shows that $\delta dv=div(F)$. This, in conjunction with equation (\ref{5.4}) shows that  equation (\ref{5.5}) assumes the form

\begin{equation}\label{5.6}
dR+div(F)=0.
\end{equation}
A straightforward computation using (\ref{5.3})  leads to

\begin{equation}\label{5.7}
Ric(V)=\frac{1}{2}\nabla R+(div(F))^{*}
\end{equation}
where $(div(F))^{*}$ denotes the vector field metrically equivalent to the 1-form $div(F)$. At this point, we recall the following result (Stepanov and Shelepova \cite{S-S}): If $(M,g,V,\lambda)$ is a Ricci soliton, then $\square V=0$, where $\square$ is the Yano operator transformiong a vector field $V$ into the vector field with components $(\square V)^i =-(\nabla^j \nabla_j V^i+R^i _j V^j)$. By our hypothesis, $\Delta v=0$, which is equivalent to  (Yano \cite{Yano})
\begin{equation*}
-(\nabla^j \nabla_j V^i-R^i _j V^j)=0,
\end{equation*}
and hence, in conjunction with the aforementioned result yields $Ric(V)=0$. The use of this consequence in equations (\ref{5.6}) and (\ref{5.7}) immediately shows that $R$ is constant, completing the proof.

\end{proof}

\begin{remark} For a gradient Ricci soliton $v =df$, we note that the constant scalar curvare condition is easily seen to be equivalent to $\Delta v=0$, and also equivalent to $Ric(V)=0$, in view of Lemma \ref{lem1} (2).
\end{remark}

The authors have no conflicts of interest to declare that are relevant to the content of this article.

\end{document}